\author{Giles Gardam and Daniel J.~Woodhouse}
\title{The geometry of one-relator groups satisfying a polynomial isoperimetric inequality}
\theoremstyle{plain}
\newtheorem{thm}{Theorem}
\newtheorem{cor}[thm]{Corollary}
\newtheorem{lem}{Lemma}
\theoremstyle{definition}
\newtheorem{qn}[lem]{Question}
\newtheorem{prop}[lem]{Proposition}
\newtheorem{conj}[lem]{Conjecture}
\newtheorem{rmk}[lem]{Remark}
\newcommand{\Z}{\mathbb{Z}}
\newcommand{\gp}[2]{\langle \, #1 \, | \, #2 \, \rangle}
\newcommand{\abs}[1]{\left\lvert#1\right\rvert}
\address{Department of Mathematics \\ Technion \\ Haifa \\ Israel}
\email{gilesgar@technion.ac.il}
\email{woodhouse.da@technion.ac.il}
\keywords{One-relator groups, Dehn functions, automatic groups, CAT(0) spaces, CAT(0) cube complexes, Baumslag--Solitar subgroups}
\subjclass[2010]{20F65, (20F67, 20E06, 20F05)}
\thanks{The first author was supported by the Israel Science Foundation (grant 662/15). The second author was supported by the Israel Science Foundation (grant 1026/15).}
\begin{document}

\begin{abstract}
For every pair of positive integers $p > q$ we construct a one-relator group $R_{p,q}$ whose Dehn function is $\simeq n^{2 \alpha}$ where $\alpha = \log_2(2p / q)$.
The group $R_{p, q}$ has no subgroup isomorphic to a Baumslag--Solitar group $BS(m, n)$ with $m \neq \pm n$, but is not automatic, not CAT(0), and cannot act freely on a CAT(0) cube complex.
This answers a long-standing question on the automaticity of one-relator groups and gives counterexamples to a conjecture of Wise.
\end{abstract}

\maketitle

\section{Introduction}

A classical topic in combinatorial and geometric group theory is \emph{one-relator groups}, that is, groups that can be defined by a presentation with only one relator.
Magnus proved that a one-relator group has solvable word problem, but the algorithmic complexity of the word problem remains unknown.
A geometric measure of this complexity is given by the Dehn function (see \cite{bridson_geom_wp} for a survey).
The Dehn function of a one-relator group can grow very quickly: the group $\gp{a, t}{a^{(a^t)} = a^2}$ has Dehn function $\operatorname{tower}_2 (\log_2(n))$, which is not bounded by any finite tower of exponents, but its word problem is nonetheless solvable in polynomial time \cite{MUW}.
This is conjecturally the largest Dehn function of a one-relator group; Bernasconi proved a weaker uniform upper bound, namely the Ackermann function \cite{bernasconi}.

On the other hand, much less is known about the intricacies of the geometry of one-relator groups satisfying a polynomial isoperimetric inequality, that is, whose Dehn function is bounded by a polynomial.
All previously known examples are hyperbolic or more generally automatic (see \cite{ECHLPT} for background on automatic groups), and thus have linear or quadratic Dehn function.
For example, every one-relator group with torsion is hyperbolic, and Wise has proved they are virtually special \cite{wise_qhc}.

A standard obstruction to a group having desirable geometry is the presence of a subgroup isomorphic to the Baumslag--Solitar group $BS(m, n) = \gp{a, t}{t^{-1} a^m t = a^n}$ for some $m \neq \pm n$: this group has a distorted cyclic subgroup, and its Dehn function is exponential.
A distorted cyclic subgroup rules out being hyperbolic, acting properly cocompactly on a CAT(0) space, or acting freely on a CAT(0) cube complex (of possibly infinite dimension).
A torsion-free one-relator group has geometric dimension 2 \cite{lyndon} so by a theorem of Gersten such a Baumslag--Solitar subgroup gives an exponential lower bound on Dehn function \cite[Theorem C]{gersten_dehn}, further ruling out automaticity which would require a quadratic isoperimetric inequality.

It has been asked whether such Baumslag--Solitar subgroups are the only pathologies for one-relator groups:

\begin{qn}
    \label{qn:muw_bs}
    Is it true that one-relator groups with no subgroups isomorphic to $BS(m,n)$, for $m \neq \pm n$, are automatic?
\end{qn}

\begin{conj}[Wise, {\cite[1.9]{wise_tubular}}]
    \label{conj:wise}
    Every [torsion-free] one-relator group with no subgroup isomorphic to $BS(m, n)$, for $m \neq \pm n$, acts freely on a CAT(0) cube complex.
\end{conj}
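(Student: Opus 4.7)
The plan is to refute Wise's conjecture by constructing the family of one-relator groups $R_{p,q}$ advertised in the abstract. The strategy breaks into three stages, of which the last is the genuine obstacle.

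\textbf{Construction.} I would engineer a relator that produces self-similar van Kampen diagrams, in the spirit of Brady--Bridson snowflake groups but staying within the one-relator world. The exponent $\alpha=\log_2(2p/q)$ suggests a doubling-type recursion with scaling factor $2p/q$, so relators of the shape $w\,a^{2p}\,w^{-1}\,a^{-q}$ (where $w$ is a suitable word in a second generator $t$ that moves the $a$-axis off itself) are the natural candidates to try. The one-relator requirement severely constrains what is possible and is what gives this problem teeth.

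\textbf{Dehn function.} The upper bound $\preceq n^{2\alpha}$ would come from an inductive filling scheme that exploits the self-similarity of the relator, assembling van Kampen diagrams from smaller copies of a fundamental pattern. For the matching lower bound I would exhibit a sequence of null-homotopic words of length $n$ whose fillings are forced to contain $\Theta(n^{2\alpha})$ nested copies of that pattern, by a perimeter-versus-area argument. Once this polynomial bound is established, two bonuses follow for free: since a torsion-free one-relator group has geometric dimension $2$, Gersten's theorem forbids any subgroup $BS(m,n)$ with $m\neq\pm n$ (such a subgroup would force exponential area), and $2\alpha>2$ rules out both automaticity and proper cocompact $\mathrm{CAT}(0)$ actions.

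\textbf{Ruling out free actions on $\mathrm{CAT}(0)$ cube complexes.} This is the heart of the problem and where I expect the real work. A super-quadratic Dehn function does not by itself forbid a free action on an infinite-dimensional $\mathrm{CAT}(0)$ cube complex, and by construction the usual distorted cyclic subgroup obstruction has been eliminated. My plan is to locate a specific finitely generated subgroup $H\leq R_{p,q}$ that is non-cubulable for reasons orthogonal to containing a bad Baumslag--Solitar subgroup; the natural candidate is a tubular subgroup to which a Hagen--Wise style non-cubulability criterion applies, or more elementarily a two-generator subgroup whose commutation relations force an equation on hyperplane stabilisers that has no solution in a free action. A free action of $R_{p,q}$ on a $\mathrm{CAT}(0)$ cube complex would restrict to one of $H$, yielding the contradiction. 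Finding such an $H$ inside a \emph{one-relator} group, and proving non-cubulability rigorously rather than via Baumslag--Solitar content, is the main obstacle I anticipate.
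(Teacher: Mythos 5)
The architecture of your plan is right in outline --- build a one-relator group with snowflake geometry, get the Dehn function, then kill cubulation via a tubular subgroup and an equitable-set type criterion --- and your third stage is in fact exactly what the paper does. But there is a genuine gap in the construction stage, and it is the decisive idea of the paper. A relator of the shape $w a^{2p} w^{-1} a^{-q}$ encodes a single HNN-type identification of one cyclic subgroup with another; the snowflake group $G_{p,q} = \gp{a,b,s,t}{[a,b],\, s^{-1}a^q s = a^p b,\, t^{-1}a^q t = a^p b^{-1}}$ needs a $\Z^2$ vertex group together with \emph{two} independent edge maps, and no relator of your proposed form produces that (for $w = t$ you literally get $BS(q,2p)$, which is exactly the pathology you are trying to avoid). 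The paper's trick is to take as vertex group the Klein bottle group $\gp{x,y}{x^2 = y^2}$ --- itself one-relator and virtually $\Z^2$ --- and to fold the two snowflake edge maps into the single relation $t^{-1}x^{2q}t = x^{2p-1}y$. Eliminating $y$ gives a genuine one-relator presentation, and passing to the orientation double cover of the Klein bottle unfolds the single edge map into the two relations of $G_{p,q}$: the snowflake group sits inside $R_{p,q}$ with index $2$. Without an idea of this kind your ``construction'' stage has no candidate group.

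This omission also makes your second stage far harder than it needs to be. Once $G_{p,q} \leq R_{p,q}$ has index $2$, the Dehn function $n^{2\alpha}$ is inherited verbatim from Brady--Bridson by commensurability invariance; you propose instead to rebuild their inductive filling scheme and lower-bound argument directly in the one-relator group, which is a substantial technical project the paper entirely avoids (and which would be difficult to carry out for a relator that does not visibly contain the snowflake pattern). For the non-cubulability step your instinct is correct and matches the paper: the relevant criterion is Wise's equitable-set characterization for tubular groups, and for $p > q$ the system $\sum_i \abs{q v_i} = \sum_i \abs{p v_i - u_i} = \sum_i \abs{p v_i + u_i}$ has no solution by a one-line triangle-inequality estimate. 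The only correction is that the tubular subgroup is not something to be hunted for inside the group --- it is the index-$2$ subgroup $G_{p,q}$ built in by design.
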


Question~\ref{qn:muw_bs} was articulated when the theory of automatic groups was first developing \cite[Problem 11 ff.]{gersten_problems} and was posed more recently by Myasnikov--Ushakov--Won \cite[1.5]{MUW}.
If true, it would imply that all polynomial Dehn functions of one-relator groups are linear or quadratic.

The first-named author introduced in his thesis~\cite{gardam} the one-relator groups
\[
    R(m, n, k, l) := \gp{x, y, t}{x^m = y^n, \, t^{-1} x^k t = x^l y } \cong \gp{x, t}{x^m (x^{-l} t^{-1} x^k t)^{-n} }
\]
for $\abs{m}, \abs{n} \geq 2$, $k \neq 0$, $l \not \equiv 0 \mod m$.
He then proved that they have no distorted Baumslag--Solitar subgroups, and that they are CAT(0) precisely when $\abs{k} > \abs{l + \frac{m}{n}}$.
In this paper we consider a subfamily of these groups: define \[
    R_{p, q} := R(2, 2, 2q, 2p-1) \cong \gp{x, y, t}{x^2 = y^2, \, t^{-1} x^{2q} t = x^{2p-1} y}.
\]

\begin{thm}
    \label{thm:main}
    Let $p > q$ be positive integers.
    The one-relator group $R_{p, q}$ has Dehn function $\simeq n^{2 \alpha}$ where $\alpha = \log_2(2p/q)$.
    In particular, it has no subgroup isomorphic to a Baumslag--Solitar group $BS(m, n)$ with $m \neq \pm n$, but is not automatic and not CAT(0).
\end{thm}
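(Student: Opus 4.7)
The first move is to re-present $R_{p,q}$ as an HNN extension. Substituting $y=a$, $x=ab$, the relation $x^2=y^2$ becomes $aba^{-1}b=1$, so $\langle x,y\rangle$ is the Klein bottle group $K=\langle a,b\mid aba^{-1}b\rangle$, inside which $a^2$ is central and $\langle a^2,b\rangle\cong\Z^2$ is the canonical index-$2$ subgroup. Using $(ab)^2=a^2$ and $a^2b=ba^2$, one computes $x^{2q}=a^{2q}$ and $x^{2p-1}y=a^{2p}b^{-1}$, so
\[R_{p,q}\cong\langle a,b,t\mid aba^{-1}b,\ t^{-1}a^{2q}t=a^{2p}b^{-1}\rangle.\]
This is an HNN extension of $K$ with infinite cyclic associated subgroups $\langle a^{2q}\rangle$ and $\langle a^{2p}b^{-1}\rangle$, both lying inside $\Z^2=\langle a^2,b\rangle$; under the identification $a^2\leftrightarrow(1,0)$, $b\leftrightarrow(0,1)$, the HNN isomorphism sends $(q,0)\mapsto(p,-1)$. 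In particular $R_{p,q}$ is torsion-free, being an HNN extension of a torsion-free group with torsion-free edge groups.

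For the lower bound $n^{2\alpha}\preceq\delta_{R_{p,q}}(n)$, my plan is to exhibit an explicit family of trivial words whose minimal fillings have area growing like $n^{2\alpha}$, adapting the snowflake-diagram framework of Brady--Bridson and Brady--Bridson--Forester--Shankar. The essential data is the expansion ratio of the defining relation: since $a^2$ commutes with $b$, iteration gives $t^{-1}a^{2qk}t=a^{2pk}b^{-k}$, converting an $a$-block of length $2qk$ into an $a$-block of length $2pk$ together with a $b$-block of length $k$. Nesting this iteratively builds snowflake-shaped diagrams whose boundary lengths $L$ and areas $A$ satisfy $A\asymp L^{2\log_2(2p/q)}$, giving the claimed $n^{2\alpha}$. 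The area lower bound requires showing that no alternative filling of these snowflake words does substantially better, via the standard Perron--Frobenius-eigenvalue argument for snowflake groups.

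For the matching upper bound $\delta_{R_{p,q}}(n)\preceq n^{2\alpha}$, the strategy is induction on the $t$-corridor complexity of a reduced van Kampen diagram. By Britton's lemma every reduced diagram decomposes into $t$-corridors, with both sides of each corridor reading words in the associated cyclic subgroups, and complementary regions filled inside the base group $K$ at quadratic area cost. Peeling off an outermost corridor trades a boundary arc of $a$-length $\ell$ for an arc of shorter $a$-length and some $b$-debris, reducing the diagram to a smaller snowflake; summing the resulting geometric series over snowflake levels reproduces the exponent $2\alpha$. I expect this to be the main technical obstacle, because $K$ is non-abelian and the two edge subgroups are not translates of one another inside $\Z^2$, so controlling how $b$-debris accumulates across successive corridors requires tracking normal forms in $K$ alongside the corridor combinatorics to ensure that debris cannot inflate the area beyond the snowflake prediction.

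The remaining conclusions follow quickly. Because $p>q$ we have $\alpha>1$, hence $2\alpha>2$, so the Dehn function is strictly super-quadratic; this rules out $R_{p,q}$ being automatic (since automatic groups have at most quadratic Dehn function) or CAT(0) (by the analogous quadratic bound for CAT(0) groups). Finally, $R_{p,q}$ is a torsion-free one-relator group, hence of geometric dimension $2$ by Lyndon's theorem, so Gersten's theorem forces any subgroup isomorphic to $BS(m,n)$ with $m\ne\pm n$ to impose an exponential lower bound on $\delta_{R_{p,q}}$, contradicting the polynomial bound $n^{2\alpha}$ just established.
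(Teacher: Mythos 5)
Your reduction of $R_{p,q}$ to an HNN extension of the Klein bottle group $K=\langle a,b\mid aba^{-1}b\rangle$ with edge map $a^{2q}\mapsto a^{2p}b^{-1}$ is correct, and your final paragraph (super-quadratic Dehn function rules out automatic and CAT(0); Lyndon plus Gersten rules out $BS(m,n)$ with $m\neq\pm n$) matches the paper exactly. But the heart of the theorem --- that the Dehn function is $\simeq n^{2\alpha}$ --- is not proved in your proposal; it is a plan, and you yourself flag the upper bound as ``the main technical obstacle.'' That obstacle is real: carrying out the corridor induction over a non-abelian vertex group, with only one stable letter and with $b$-debris to control, would amount to redoing the Brady--Bridson snowflake analysis in a harder setting. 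The lower bound sketch also has a concrete defect: iterating the single relation to get $t^{-1}a^{2qk}t=a^{2pk}b^{-k}$ only accumulates $b$-letters linearly and does not by itself produce the snowflake recursion. The recursion needs \emph{two} expansion branches, $a^q\mapsto a^pb$ and $a^q\mapsto a^pb^{-1}$, whose product kills the $b$-debris and doubles the $a$-length; in $R_{p,q}$ the second branch only appears after conjugating the edge map by $a$ (using $aba^{-1}=b^{-1}$ in $K$), and your sketch never identifies it.

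The missing idea, which is the paper's entire content, is that one need not adapt the snowflake arguments at all: passing to the index-$2$ subgroup corresponding to $a\mapsto 1$, $b\mapsto 0$, $t\mapsto 0$ in $\Z/2$ (the orientation double cover of the Klein bottle vertex space) replaces $K$ by $\Z^2=\langle a^2,b\rangle$ and lifts the single edge to two edges, yielding the presentation $\langle x,y,s,t\mid [x,y],\ s^{-1}x^qs=x^py,\ t^{-1}x^qt=x^py^{-1}\rangle$ --- literally the Brady--Bridson snowflake group $G_{p,q}$. Since the Dehn function is invariant under passage to finite-index subgroups, the exponent $2\alpha$ then follows by citation rather than by reconstruction. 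Your approach could in principle be completed, but as written the central estimate is asserted rather than established, so the proof is incomplete.
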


This answers Question~\ref{qn:muw_bs} negatively.
The key observation is that $R_{p,q}$ is virtually a \emph{tubular group}.
A group is tubular if it splits as a finite graph of groups with $\Z^2$ vertex groups and $\Z$ edge groups.

\begin{proof}[Proof of Theorem~\ref{thm:main}]
    It is demonstrated in Theorem~\ref{thm:snowflake_subgroup} below that $R_{p,q}$ has an index two subgroup that is isomorphic to the Brady--Bridson snowflake (tubular) group $G_{p,q}$.
    These groups are discussed in full in Section~\ref{section:snowflake}, but the salient fact is that $G_{p,q}$ has Dehn function $\simeq n^{2 \alpha}$ where $\alpha = \log_2(2p/q) > 1$; as the Dehn function is invariant up to finite index subgroups the first part of the statement holds.
    In contast, automatic and CAT(0) groups have at most quadratic Dehn function.
    Since $R_{p,q}$ is of geometric dimension 2 we conclude from Gersten's theorem that there are no such Baumslag--Solitar subgroups as their presence would force at least exponential Dehn function.
\end{proof}

\begin{rmk}
    Jack Button has shown that for odd $q \geq 3$, the group $R_{1,q}$ is \emph{not} residually finite, giving the first examples of one-relator groups that are CAT(0) (by \cite[Theorem G]{gardam}) but not residually finite~\cite{Button}.
     The proof shows that $G_{1,q}$ is non-Hopfian, which implies that $G_{1,q}$ and thus $R_{1,q}$ are not equationally Noetherian, resolving \cite[Problem 1]{baumslag_problems}; in fact one can extend Button's surjective endomorphism of $G_{1,q}$ with non-trivial kernel to show that $R_{1,q}$ itself is non-Hopfian, resolving \cite[Problem 7]{baumslag_problems}.
     Button has informed us that he also has completely determined for which~$p$ and $q$ the group $R_{p,q}$ is residually finite.
\end{rmk}

In~\cite{wise_tubular}, Wise classified the tubular groups that act freely on CAT(0) cube complexes.
In Section~\ref{section:cubulation} we apply this classification to disprove Conjecture~\ref{conj:wise}.
The Dehn function is a quasi-isometry invariant, so there are infinitely many quasi-isometry types of counterexamples to Question~\ref{qn:muw_bs} and Conjecture~\ref{conj:wise}.

\section{Virtually snowflake groups}
\label{section:snowflake}

In~\cite{BB} Brady and Bridson studied the groups \[
    G_{p,q} := \gp{ a, b, s, t }{ [a, b], \, s^{-1} a^q s = a^p b, \, t^{-1} a^q t = a^p b^{-1} }
\]
defined for positive integers $p$ and $q$.
Due to the suggestive nature of their van Kampen diagrams, these are called \emph{snowflake groups}.
The main theorem of their paper states that for $p \geq q$, the Dehn function of $G_{p,q}$ is $\simeq n^{2 \alpha}$ where $\alpha = \log_2(2p/q)$.
This gives the Dehn function of $R_{p,q}$, via the~following:

\begin{thm} \label{thm:snowflake_subgroup}
    The snowflake group $G_{p,q}$ is an index $2$ subgroup of the one-relator group~$R_{p,q}$.
\end{thm}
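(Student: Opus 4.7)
The plan is to realize $G_{p,q}$ as the kernel of an explicit surjection $\pi \colon R_{p,q} \twoheadrightarrow \Z/2\Z$. Sending $x, y \mapsto 1$ and $t \mapsto 0$ respects both defining relators of $R_{p,q}$ (the exponent sums are $0$ and $2q - 2p$, both even), so $\pi$ is well-defined with an index-two kernel $K$.

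Next I would define a homomorphism $\iota \colon G_{p,q} \to R_{p,q}$ by sending $a \mapsto x^2$, $b \mapsto x^{-1}y$, and the $G_{p,q}$-generators $s, t$ to $t$ and $xtx^{-1}$ respectively. I would verify the three defining relations of $G_{p,q}$ in turn: $[a,b] = 1$ reduces to $x^2$ commuting with $y$, which follows from $x^2 = y^2$; $s^{-1}a^q s = a^p b$ transcribes literally to the second defining relator of $R_{p,q}$; and $t^{-1}a^q t = a^p b^{-1}$, after conjugating by $x$ and applying the previous relation, rearranges once more to $x^2 = y^2$. Since all four images lie in $K$, this gives a well-defined homomorphism $\iota \colon G_{p,q} \to K$.

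To conclude that $\iota$ is an isomorphism I would compute the Reidemeister--Schreier presentation of $K$ using the Schreier transversal $\{1, x\}$. The five Schreier generators are $\zeta = x^2$, $\beta = xy$, $\alpha = yx^{-1}$, $\tau = t$, and $\tau' = xtx^{-1}$. The two lifts of $x^2 = y^2$ yield $\zeta = \alpha\beta = \beta\alpha$, so $[\alpha,\beta] = 1$ and $\zeta$ is redundant, while the two lifts of $t^{-1}x^{2q}t = x^{2p-1}y$ become $\tau^{-1}\zeta^q\tau = \zeta^{p-1}\beta$ and $\tau'^{-1}\zeta^q\tau' = \zeta^p\alpha$. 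Under the substitution $a = \zeta$ and $b = \zeta^{-1}\beta$ (so that $\beta = ab$ and $\alpha = b^{-1}$), these relations become exactly $s^{-1}a^q s = a^p b$ and $t^{-1}a^q t = a^p b^{-1}$, matching the snowflake presentation and completing the identification.

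The main obstacle is the careful coset-bookkeeping required by the Reidemeister--Schreier rewriting of the second relator. The alternating subwords $x^{2q}$ and $x^{-(2p-1)}$ must be processed letter by letter so that the telescoping product collects the correct exponents of the central element $\zeta$ --- namely $q$ on the left-hand side and $p-1$ or $p$ on the right, depending on the transversal element --- while the isolated $y^{-1}$ contributes $\beta^{-1}$ in one rewrite and $\alpha^{-1}$ in the other. Once these exponents are pinned down, matching the rewritten presentation to the snowflake presentation is a routine change of variables.
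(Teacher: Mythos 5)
Your proposal is correct, and I checked the key computations: the kernel you take is the same index-two subgroup as in the paper (the paper rewrites $R_{p,q}$ on generators $a = x$, $b = x^{-1}y$ and uses the map $a \mapsto 1$, $b, t \mapsto 0$, which agrees with your $x, y \mapsto 1$, $t \mapsto 0$), your five Schreier generators are right, the lifted relators do come out as $\zeta = \alpha\beta = \beta\alpha$, $\tau^{-1}\zeta^q\tau = \zeta^{p-1}\beta$ and $\tau'^{-1}\zeta^q\tau' = \zeta^{p}\alpha$ (one can sanity-check these directly: $\zeta^{p-1}\beta = x^{2p-2}\cdot xy = x^{2p-1}y = t^{-1}x^{2q}t$, and $\zeta^p\alpha = x^{2p}yx^{-1} = xt^{-1}x^{2q}tx^{-1}$), and the change of variables $a = \zeta$, $b = \alpha^{-1}$ recovers the snowflake presentation. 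The difference from the paper is one of technique rather than substance: the paper first passes to the Klein-bottle presentation $\gp{a,b,t}{a^{-1}bab,\ t^{-1}a^{2q}t = a^{2p}b}$, realizes $R_{p,q}$ as a graph of spaces with a Klein bottle vertex space, and reads off the presentation of the index-two subgroup from the orientation double cover, so the relator bookkeeping is absorbed into a picture of lifted attaching curves; your Reidemeister--Schreier rewriting is the algebraic shadow of that covering-space argument. Your route is more elementary and self-contained (and the explicit homomorphism $\iota$ is a useful byproduct), at the cost of exactly the coset bookkeeping you flag; the paper's route is shorter on the page and makes the tubular (graph-of-groups) structure of the subgroup visible, which is what the rest of the paper exploits.
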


\begin{proof}
    First, we re-write the presentation of $R$ to exploit the fact that $\gp{x, y}{x^2 = y^2}$ is the fundamental group of the Klein bottle: we map $x \mapsto a$ and $y \mapsto ab$ to get \[
        R_{p,q} \cong \gp{a,b,t}{ a^{-1} b a b, \, t^{-1} a^{2q} t = a^{2p} b}.
    \]
    Let $X$ be the graph of spaces for $R_{p,q}$ with a vertex space a Klein bottle and edge space a cylinder.
    We can assume that the attaching maps are geodesics in the Klein bottle as in Figure~\ref{fig:cover2}.
    Let $X' \rightarrow X$ be the index two regular cover corresponding to the map to $\Z/2$ defined by $a \mapsto 1$, $b \mapsto 0$ and $t \mapsto 0$, indicated in Figure~\ref{fig:cover2}; on the Klein bottle subspace this is just the oriented double cover.

    \begin{figure}
        \centering
        \includegraphics[width=0.55\textwidth]{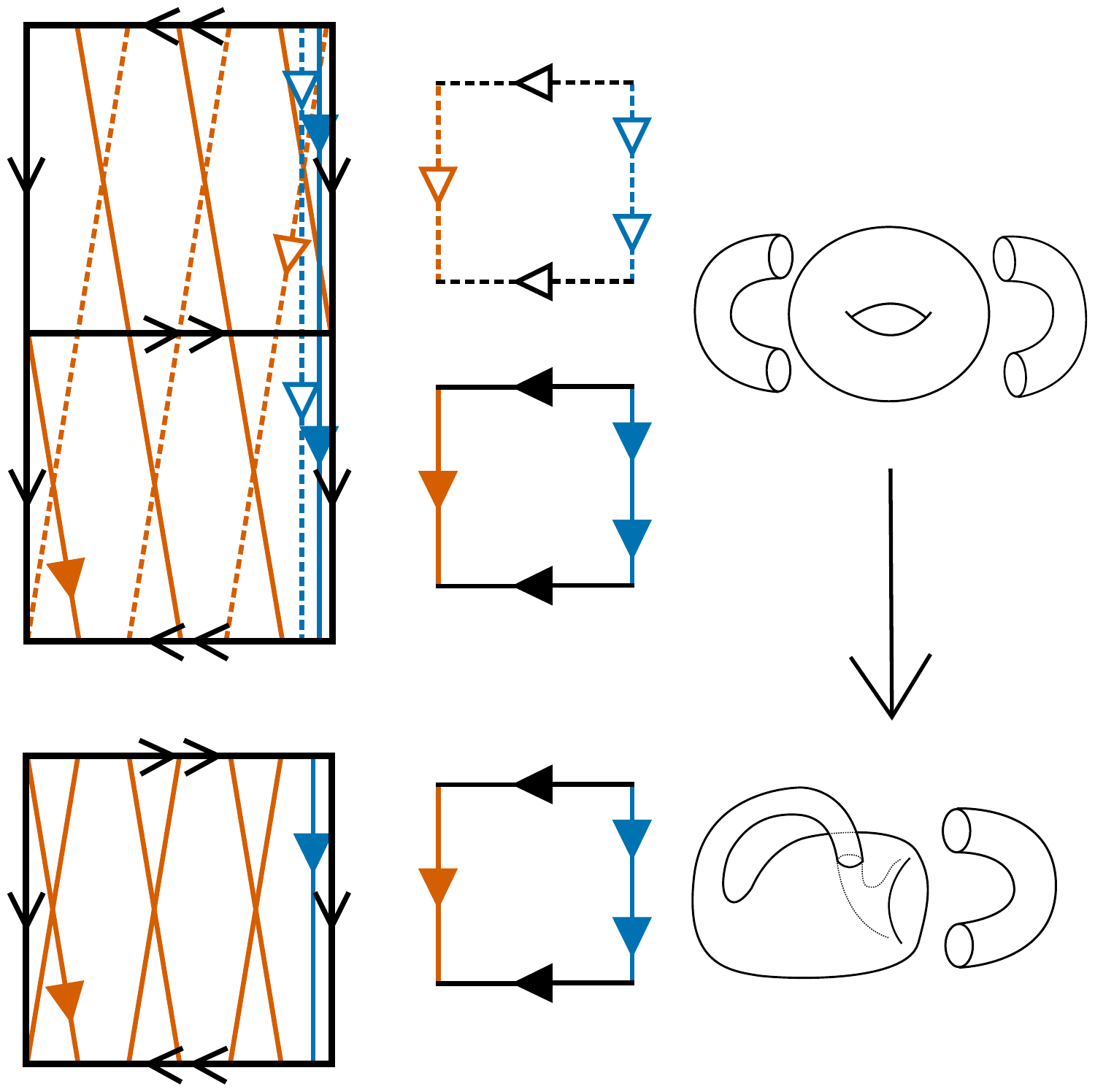}
        \caption{$G_{3,1}$ as an index $2$ subgroup of $R_{3,1}$. The two blue arrows in each cylinder are attached along $a$ and the orange arrows along geodesics representing (lifts of) $a^6 b$.}
        \label{fig:cover2}
    \end{figure}

    The fundamental group of $X'$ has the presentation \[
        \gp{x, y, s, t}{ [x, y], \, s^{-1} x^q s = x^p y, \, t^{-1} x^q t = x^{p} y^{-1}}
    \] where $x$ and $y$ are the generators of the fundamental group of the torus (corresponding to $a^2$ and $b$ respectively).
    This group is none other than $G_{p,q}$.
\end{proof}

\begin{cor}
    The set of exponents $\rho$ such that $n^\rho$ is the Dehn function of a one-relator group is dense in $[2, \infty)$.
\end{cor}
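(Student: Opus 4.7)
The plan is direct: Theorem~\ref{thm:main} provides, for each pair of positive integers $p > q$, a one-relator group $R_{p,q}$ whose Dehn function has exponent $\rho(p,q) := 2\log_2(2p/q)$, so the corollary reduces to showing that the set $S := \{\rho(p,q) : p > q \geq 1\}$ is dense in $[2, \infty)$.

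The key step is to observe that the map $r \mapsto 2\log_2(2r)$ is a continuous, strictly increasing bijection from $(1, \infty)$ onto $(2, \infty)$. Since the rationals $p/q$ with integers $p > q \geq 1$ are dense in $(1, \infty)$, continuity implies that their image $S$ is dense in $(2, \infty)$. Density in $[2, \infty)$ then follows by approaching $2$ from above, for instance taking $p = q+1$ and letting $q \to \infty$, whence $2\log_2(2 + 2/q) \to 2$.

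The main obstacle has already been dispatched by Theorem~\ref{thm:main}, whose proof hinges on identifying $R_{p,q}$ with an index two overgroup of the Brady--Bridson snowflake group $G_{p,q}$ and invoking the snowflake Dehn function computation. Given this input, the corollary is essentially a one-line consequence of the density of $\mathbb{Q}$ in $\mathbb{R}$, and no further technical machinery is required.
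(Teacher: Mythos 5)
Your proof is correct and matches the paper's intent exactly: the corollary is stated without proof as an immediate consequence of Theorem~\ref{thm:main}, and the argument you supply (density of the rationals $p/q > 1$, continuity and surjectivity of $r \mapsto 2\log_2(2r)$ onto $(2,\infty)$, and the limit $2\log_2(2+2/q) \to 2$ to capture the endpoint) is precisely the intended one-line justification.
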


\begin{rmk}
    \cite[Problem 1.4]{MUW} asks whether quadratic Dehn function implies that a one-relator group is automatic.
    The snowflake group $G_{1,1}$ is Gersten's non-CAT(0) free-by-cyclic group introduced in \cite{gersten_cat}.
    It has been announced that this group is not automatic \cite{BR_free_by_cyclic}, which would settle this remaining problem as well.
\end{rmk}

\section{Non-cubulated examples}
\label{section:cubulation}

In~\cite{wise_tubular}, Wise gave a necessary and sufficient condition for a tubular group to act freely on a CAT(0) cube complex.
The condition is the existence of an \emph{equitable set} which permits the construction of immersed walls in the graph of spaces associated to the group.
A dual cube complex is then obtained from the corresponding wallspace.

\begin{prop}
    Let $p$ and $q$ be positive integers.
    The snowflake group $G_{p,q}$ acts freely on a CAT(0) cube complex if and only if $p \leq q$.
\end{prop}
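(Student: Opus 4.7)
The plan is to invoke Wise's characterization from \cite{wise_tubular}: a tubular group acts freely on a CAT(0) cube complex if and only if it admits an \emph{equitable set}. For the one-vertex tubular group $G_{p,q}$ with $V \cong \Z^2 = \langle a, b\rangle$, this amounts to finding a finite multiset $\Sigma$ of nonzero vectors in $\Z^2$, together with positive integer multiplicities $c_\sigma$, such that for each edge $e$ with attaching elements $u_e, v_e \in \Z^2$ the wall-crossing count is balanced:
\[
    \sum_{\sigma \in \Sigma} c_\sigma |\sigma \wedge u_e| \;=\; \sum_{\sigma \in \Sigma} c_\sigma |\sigma \wedge v_e|,
\]
where $\wedge$ is the determinant on $\Z^2$. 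For $G_{p,q}$ the edge data is $u_s = u_t = (q,0)$, $v_s = (p, 1)$, $v_t = (p, -1)$, and writing $\sigma = (m_\sigma, n_\sigma)$ the two balance equations become
\[
    q \sum_\sigma c_\sigma |n_\sigma| \;=\; \sum_\sigma c_\sigma |m_\sigma - p n_\sigma| \;=\; \sum_\sigma c_\sigma |m_\sigma + p n_\sigma|.
\]

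For the case $p \leq q$, I would exhibit the explicit equitable set $\Sigma = \{a^p b, \, a^p b^{-1}\}$, augmented with $2(q-p)$ copies of $a$ when $p < q$. The $b$ and $b^{-1}$ entries make the two balance equations agree via the symmetry $n \leftrightarrow -n$, and a direct substitution verifies that both sides of each equation evaluate to $2q$.

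For the case $p > q$, I would obstruct the existence of any equitable set using the elementary identity $|x - y| + |x + y| = 2\max(|x|, |y|) \ge 2|y|$. Applying this with $x = m_\sigma$ and $y = p n_\sigma$ and summing with multiplicities, the two balance equations combine to yield
\[
    2 q \sum_\sigma c_\sigma |n_\sigma| \;\geq\; 2 p \sum_\sigma c_\sigma |n_\sigma|,
\]
which forces $n_\sigma = 0$ for every $\sigma$ since $p > q$ and $c_\sigma > 0$. The balance equations then collapse to $\sum_\sigma c_\sigma |m_\sigma| = 0$, forcing $m_\sigma = 0$ as well, contradicting the nontriviality of each $\sigma \in \Sigma$.

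The main obstacle will be translating correctly between Wise's wall-crossing criterion in \cite{wise_tubular} and the determinantal balance equations used above; once this dictionary is in place, both directions are short elementary computations.
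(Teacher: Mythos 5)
Your proposal is correct and follows essentially the same route as the paper: reduce to Wise's equitable-set criterion, exhibit an explicit set for $p \leq q$ (the paper uses $\{(q,1),(q,-1)\}$, which works just as well as your $\{(p,1),(p,-1)\}$ padded with copies of $a$), and for $p > q$ derive the same contradiction, your identity $\abs{x-y}+\abs{x+y} = 2\max(\abs{x},\abs{y})$ playing the role of the paper's triangle inequality. The one detail to not forget when writing this up is Wise's requirement that the equitable set generate a finite-index subgroup of $\Z^2$ --- your set does, but you should verify it explicitly.
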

\begin{proof}
    The existence of a free action of $G_{p,q}$ on a CAT(0) cube complex is equivalent to the existence of an equitable set: $S = \{ (u_1, v_1), \dots, (u_k, v_k) \} \subseteq \Z^2 \setminus \{ (0, 0) \}$ such that $[ \Z^2: \langle S \rangle ] < \infty$ and
        \[
        \sum_i \# [(q, 0), (u_i, v_i) ] = \sum_i \# [(p, 1), (u_i, v_i) ],
        \quad
        \sum_i \# [(q, 0), (u_i, v_i) ] = \sum_i \# [(p, -1), (u_i, v_i) ]
        \]
    where $\#[(a, b), (c, d)]$ denote the ``intersection number'' $\abs{ad - bc}$.
    Thus the problem reduces to solving
    \begin{equation}
        \label{eq:target}
        \tag{$\star$}
        \sum_i \abs{q v_i} = \sum_i \abs{p v_i - u_i} = \sum_i \abs{p v_i + u_i}.
    \end{equation}
    If $p \leq q$, then a solution is $\{ (q, 1), (q, -1) \}$.
    If $p > q$, there is no solution:
    \begin{align*}
    \sum_i \abs{p v_i - u_i} + \abs{p v_i + u_i} \geq \sum_i \abs{(p v_i - u_i) + (p v_i + u_i)} = 2 \sum_i \abs{p v_i} \geq 2 \sum_i \abs{q v_i}
    \end{align*}
    and equality can only hold in this last inequality if all $v_i = 0$, in which case some $u_i \neq 0$ and \eqref{eq:target} clearly cannot hold.
\end{proof}

\begin{cor}
    \label{cor:cube}
    Let $p > q$ be positive integers.
    Then the one-relator group $R_{p,q}$ has no subgroup isomorphic to $BS(m, n)$ for $m \neq \pm n$ but does not act freely on a CAT(0) cube complex.
\end{cor}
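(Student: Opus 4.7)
The plan is to assemble the corollary from three ingredients already established in the paper: the absence of Baumslag--Solitar subgroups, the passage to the snowflake subgroup, and the just-proved obstruction to cubulating $G_{p,q}$ when $p > q$.

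For the first assertion, there is nothing new to prove: the statement that $R_{p,q}$ contains no $BS(m,n)$ with $m \neq \pm n$ is exactly the content of the second half of Theorem~\ref{thm:main}, so I would simply cite it.

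For the second assertion, the argument is a straightforward subgroup-inheritance trick. If $R_{p,q}$ acted freely on a CAT(0) cube complex $Y$, then by restriction every subgroup of $R_{p,q}$ would also act freely on~$Y$. In particular, the index~$2$ subgroup $G_{p,q}$ furnished by Theorem~\ref{thm:snowflake_subgroup} would act freely on a CAT(0) cube complex. However, since $p > q$, the preceding Proposition rules this out. Hence no such $Y$ exists, and the corollary follows.

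There is no real obstacle here: the entire content is that free actions on CAT(0) cube complexes descend to subgroups, which is immediate from the definition. The substantive work has already been carried out in the Proposition via Wise's equitable set criterion, and the bridge from $G_{p,q}$ to $R_{p,q}$ is provided by Theorem~\ref{thm:snowflake_subgroup}. Accordingly I would keep the proof to a couple of lines, being careful only to flag that cubulation is inherited by subgroups (no finite index or finite-dimensionality hypothesis is needed for this direction).
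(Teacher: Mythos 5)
Your proposal is correct and is exactly the argument the paper intends (the corollary is stated without an explicit proof, as it follows immediately from Theorem~\ref{thm:main}, Theorem~\ref{thm:snowflake_subgroup}, the Proposition, and the fact that free actions on CAT(0) cube complexes restrict to subgroups). Your remark that no finite-index or dimension hypothesis is needed for this direction is accurate.
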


\begin{rmk}
    One can also deduce that for $p > q$ the group $G_{p,q}$ does not act freely on a CAT(0) cube complex from the fact that $G_{p,q}$ has a cyclic subgroup with distortion $n^\alpha$ \cite[Corollary 2.3]{BB} whereas cyclic subgroups are undistorted in groups admitting such actions by \cite[Theorem 1.5]{haglund_isometries} (which was generalized to finitely generated virtually abelian subgroups in \cite{woodhouse_axis}).
\end{rmk}

\subsection*{Acknowledgements}

We thank the referee for suggestions that improved the exposition of this paper.

\bibliography{geom-one-rel}{}
\bibliographystyle{alpha}

\end{document}